\DeclareMathOperator*{\argmin}{argmin}
\newtheorem{thm}{Theorem}
\newtheorem{algorithm}{Algorithm}
\title{Sampling error correction in ensemble Kalman inversion}
\author[1]{Yoonsang Lee\thanks{yoonsang.lee@dartmouth.edu}}
\affil[1]{Department of Mathematics, Dartmouth College}
\date{}
\begin{document}

\maketitle
\begin{abstract}
Ensemble Kalman inversion is a parallelizable derivative-free method to solve inverse problems. The method uses an ensemble that follows the Kalman update formula iteratively to solve an optimization problem. The ensemble size is crucial to capture the correct statistical information in estimating the unknown variable of interest. Still, the ensemble is limited to a size smaller than the unknown variable's dimension for computational efficiency. This study proposes a strategy to correct the sampling error due to a small ensemble size, which improves the performance of the ensemble Kalman inversion.
This study validates the efficiency and robustness of the proposed strategy through a suite of numerical tests, including compressive sensing, image deblurring, parameter estimation of a nonlinear dynamical system, and a PDE-constrained inverse problem.
\end{abstract}

\section{Introduction}\label{sec:introduction}
Ensemble-based Kalman filters \cite{EnKF,EnSQKF,EAKF,ETKF} have shown successful results in estimating state variables or parameters, particularly for high-dimensional systems, such as oceanic and atmospheric applications.
As a data assimilation method that incorporates information from a prediction model and measurement data, the ensemble Kalman filter focuses on an accurate statistical description of a variable of interest that changes over time.
Ensemble Kalman inversion (EKI), pioneered in the oil industry and mathematically founded in \cite{EKI}, uses the ensemble Kalman update iteratively to solve an inverse problem as a derivative-free and parallelizable method. 
EKI uses the statistical information of the ensemble to determine the direction of an update without variational calculations of the gradients of the forward or the adjoint models of an inverse problem.
Also, each ensemble member is independent of each other in evaluating the forward model, which enables a highly efficient parallel computation. 
From these characteristics of EKI, EKI has been applied to a wide range of inverse problems where a forward model is computationally expensive to solve. 
Examples include the estimation of the permeability field of subsurface flow \cite{iterativeregularization}, learning of dynamical systems with noise \cite{sparseEKI}, and machine learning tasks \cite{MLEKI}. 

In EKI, a small ensemble size introduces statistical deficiencies in extracting the statistical information of the ensemble, which affects the accuracy of EKI. However, for an inverse problem with a computationally expensive forward model, such as PDE-constrained optimization problems, the ensemble size is limited to a small number (significantly smaller than the unknown variable's dimension) due to the computational cost of running the forward model. The ensemble updated by the Kalman formula remains in the linear span of the initial ensemble \cite{EKI}, which plays a role in the regularization of EKI. Therefore, the initial ensemble must be chosen judiciously so that the unknown variable lies in the linear span of the initial ensemble, which can be challenging without prior information of the unknown variable. For sparse reconstruction problems, the multiple batch idea \cite{multipleBatch} has been incorporated to enable EKI to use a small ensemble size \cite{lpEKI,sparseEKI} without relying on the ensemble initialization. The multiple batch idea runs several batches where unnecessary components (that is, zero components) are removed after each batch. Through this process, the problem can be formulated as a smaller size problem than the original problem. However, the multiple batch approach for EKI does not work for non-sparse recovery problems.

\begin{figure}[!t]
\centering
\includegraphics[width=.5\textwidth]{./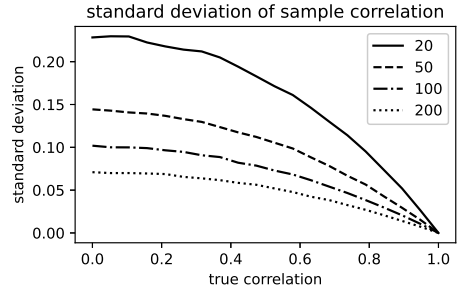}
\caption{Standard deviations of sample correlations for various true correlation values and sample sizes (different lines represent different sample sizes).}\label{fig:sampleerror}
\end{figure}
Ensemble-based Kalman filters also have the sampling issue, which has attracted many researchers in the data assimilation community, particularly for geophysical systems. Due to the vast size of a geophysical system, the unknown variable's dimension is typically much larger than the ensemble size. Common strategies to handle the sampling error include covariance inflation \cite{inflation} and localization \cite{locHoutekamer, locHamill}. The covariance inflation increases the ensemble's spread to account for the missing information in the ensemble. The localization, on the other hand, is related to modifying the spurious correlation of the ensemble. 
Figure \ref{fig:sampleerror} shows the standard deviation of sample correlations using various sample sizes and true correlation values. This figure shows that the sampling error in the sample correlation is significant for a small true correlation and a small sample size. When the sample size is small, there is a high chance of a spurious correlation between unrelated components, and thus the ensemble filters lose accuracy by making unnecessary updates.
The idea of the covariance localization is to shrink the spurious correlation based on the distance between the measurement data and the unknown variable of interest. The localization, along with the inflation, has played a crucial role in the accuracy of the ensemble-based Kalman filters, including its application for a Bayesian inverse method using Markov-chain Monte Carlo \cite{locMCMC}.

The covariance inflation and localization have been incorporated in EKI \cite{analysisEKI}. Still, the localization is limited when there is a well-defined distance between the measurement data and the unknown variable of interest. For a nonlinear forward map where the relation between the measurement data and the unknown variable of interest is not explicit, such a localization method is not applicable. There is a localization method for nonlinear observation operators \cite{locEKF} based on the Schur-product of the covariance. However, this method is still limited to modifying correlations between the components of the unknown variable where a distance is well-defined. 

The main goal of this study is to propose a sampling error correction (SEC) strategy that enables a small ensemble size in EKI for inverse problems in which there is no well-defined distance for the covariance localization. The strategy is influenced by the sampling error correction method for ensemble Kalman filters \cite{AndersonSEC}. The SEC method in \cite{AndersonSEC} uses a correlation correction term built from offline Monte Carlo simulations for various ensemble sizes and correlations. The correlation correction is significant for a very small sample size. Still, the correction attenuates for a sample size of order $\mathcal{O}(100)$ or larger, which is still small for a high-dimensional unknown variable to estimate. 
Thus, a direct application of SEC in \cite{AndersonSEC} to EKI does not significantly impact performance when the ensemble size is comparable to or larger than $\mathcal{O}(100)$.
Our strategy uses a power function to impose a much stronger correction for various ensemble sizes.
The calculation of the correction term is explicit and does not require any pre-computation. As it is known for the localization in ensemble Kalman filters, we show in Section \ref{sec:secEKI} that the ensemble updated through the sampling error correction does not necessarily lie in the linear span of the initial ensemble. That is, the sampling error correction overcomes the limitation of the invariant subspace property of the standard EKI. Thus, a judicious ensemble initialization is not required for the robust performance of EKI using a small ensemble size.

The structure of this paper is as follows. In Section \ref{sec:EKI}, we briefly review the standard EKI. Section \ref{sec:secEKI} is devoted to the description of the sampling error correction method for EKI and an analysis of its property. A suite of numerical evidence is provided in Section \ref{sec:numerical}, including compressive sensing, image deblurring, parameter estimation of a nonlinear dynamical system, and a 2D PDE-constrained inverse problem. The paper concludes in Section \ref{sec:conclusions} with discussions about limitations and future directions of the current study.

\section{Ensemble Kalman inversion}\label{sec:EKI}
In this section, we review the discrete-time ensemble Kalman inversion (EKI). The contents of this section are intended to be minimal for the description of the sampling error correction method for EKI proposed in the next section. For more details about the method in this section, we refer \cite{EKI} and \cite{lpEKI}.

\subsection{Problem formulation}
Before we describe the discrete-time EKI, we formulate an inverse problem of interest. Using EKI, we are interested in estimating an unknown variable $u\in\mathbb{R}^N$ using measurement data $y\in\mathbb{R}^{M}$ where $u$ and $y$ are related as
\begin{equation}\label{eq:inverseproblem}
y = G(u)+\eta.
\end{equation}
$G:\mathbb{R}^N\to\mathbb{R}^{M}$ is a forward model, which can be nonlinear and computationally expensive to solve, such as solving a PDE model. $\eta\in\mathbb{R}^{M}$ is a measurement error, which is assumed to be Gaussian with mean zero and covariance $\Gamma$. 
With an appropriate regularization term $R(u)$, the unknown variable $u$ is estimated by solving the following optimization problem
\begin{equation}\label{eq:optimizationproblem}
\argmin_{u\in\mathbb{R}^N}\; R(u)+\|y-G(u)\|_{\Gamma}^2.
\end{equation}
Here $\|\cdot\|_{\Gamma}$ is the norm induced by the inner product using the inverse of the covariance matrix $\Gamma$. That is, $\|a\|_{\Gamma}^2=\langle a,\Gamma^{-1}a\rangle$ for the standard inner product $\langle,\rangle$ in $\mathbb{R}^{M}$. The $l_p$-regularized EKI ($l_p$EKI, \cite{lpEKI}) implements $R(u)=\lambda\|u\|_p^p,p>0$, including $p=2$ for Tikhonov regularization and $p\leq 1$ for sparse reconstruction. Here $\lambda$ is a regularization coefficient, which determines the strength of the regularization term. In the next section, we review the discrete-time EKI without the regularization term for its simplicity. We provide the complete $l_p$EKI algorithm in Appendix.

\subsection{Discrete-time ensemble Kalman inversion}\label{subsec:EKI}
In order to use the ensemble Kalman update for a nonlinear measurement problem, EKI uses an extended space $(u,g)\in\mathbb{R}^{N+M}$ and applies an artificial dynamics to the extended variable. For the variable at the ($n$-1)-th iteration step, $(u_{n-1},g_{n-1})$, the artificial dynamics applied to $(u_{n-1},g_{n-1})$ yields $(u_n,g_n)$ that is given by
\begin{equation}
(u_n,g_n)=(u_{n-1},G(u_{n-1})).
\end{equation}
The extended space framework enables us to use the ensemble Kalman update designed for linear measurement. The measurement in the extended framework is the projection of $(u,g)$ onto $g$. Instead of handling the nonlinear forward model as a measurement operator in the Kalman filter, we treat the forward model as a prediction model for the Kalman filter and apply the linear projection measurement for the extended state variable. Under this setup, EKI applies the artificial dynamics and the ensemble Kalman update iteratively to estimate $u$ as a solution of the inverse problem \eqref{eq:inverseproblem}.

In implementing the discrete-time EKI, it is not necessary to handle the higher dimensional state vector $(u,g)$. As we are interested in the update of $u$ while $g$ is subordinate to $u$, the actual iteration of EKI does not involve any computation related to a vector in $\mathbb{R}^{N+M}$. The complete discrete-time EKI algorithm is described below.

\vspace{0.05\textwidth}
\begin{algorithm}\label{algo:EKI}
\textbf{discrete-time EKI}
\end{algorithm}
Assumption: an initial ensemble of size $K$, $\{u_0^{(k)}\}_{k=1}^K$ from prior information, is given.\\
For $n=1,2,...,$
\begin{enumerate}
	\item Prediction step using the artificial dynamics:
	\begin{enumerate}
		\item Apply the forward model $G$ to each ensemble member 
\begin{equation}
g_n^{(k)}:=G(u_{n-1}^{(k)})
\end{equation}
		\item From the set of the predictions $\{g_n^{(k)}\}_{k=1}^K$, calculate the mean and covariances
\begin{equation}\label{eq:samplemean}
\overline{g}_n=\frac{1}{K}\sum_{k=1}^Kg_n^{(k)},
\end{equation}
\begin{equation}\label{eq:samplecovariance}
\begin{split}
C^{ug}_n&=\frac{1}{K}\sum_{k=1}^K(u_n^{(k)}-\overline{u}_n)\otimes(g_n^{(k)}-\overline{g}_n),\\
C^{gg}_n&=\frac{1}{K}\sum_{k=1}^K(g_n^{(k)}-\overline{g}_n)\otimes(g_n^{(k)}-\overline{g}_n),
\end{split}
\end{equation}

where $\overline{u}_n$ is the mean of $\{u_n^{(k)}\}$, i.e., $\displaystyle\frac{1}{K}\sum_{k=1}^Ku_n^{(k)}$.
	\end{enumerate}

\item Analysis step:
	\begin{enumerate}
		\item Update each ensemble member $u_n^{(k)}$ using the Kalman update
\begin{equation}\label{eq:ensembleupdate}
u_{n+1}^{(k)}=u_{n}^{(k)}+C^{ug}_n(C^{gg}_n+\Gamma)^{-1}(y_{n}^{(k)}-g_n^{(k)}),
\end{equation}
where $y_{n+1}^{(k)}=y+\zeta_{n+1}^{(k)}$ is a perturbed measurement using Gaussian noise $\zeta_{n+1}^{(k)}$ with mean zero and covariance $\Gamma$.

		\item Compute the ensemble mean as an estimate for the solution $u$
		\begin{equation}
		\overline{u}_{n+1}=\frac{1}{K}\sum_{k=1}^Ku_n^{(k)}
		\end{equation}
	\end{enumerate}
\end{enumerate}
We note that the term $C^{ug}_n(C^{gg}_n+\Gamma)^{-1}$ in \ref{eq:ensembleupdate} is from the Kalman gain matrix. The goal of EKI is to update $u$ to approximate the solution of the optimization problem \eqref{eq:optimizationproblem}. As we are interested in only $u$ while $g$ in the extended space is subject to $u$, a simplification of the standard Kalman update yields \eqref{eq:optimizationproblem}. 

The Kalman update strategy we consider here follows the idea of the ensemble Kalman filter \cite{EnKF}. Instead of this random update, it is possible to use the same measurement $y$ without perturbing it with Gaussian noise. 
Other classes of deterministic ensemble update methods are worth further consideration, such as such as square-root filters \cite{EnSQKF}, including ensemble adjustment \cite{EAKF} and ensemble transformation \cite{ETKF} Kalman filters. Also, in \cite{analysisEKI}, a continuous-time limit of EKI has been discussed, which has a computational benefit. It does not require inversion of a covariance matrix if the observation errors are uncorrelated between each component (that is, the observation error covariance is diagonal). Additionally, EKI can exploit an adaptive time-stepping to improve the performance similar to the adaptive learning rate in the machine learning tasks. We leave the investigation of these variants for further study, and we focus on the discrete-time perturbed observation update described in the algorithm.

\section{Sampling error correction}\label{sec:secEKI}
The standard localization for data assimilation requires a well-defined distance function between different components of variables (the measurement and the unknown variable). When there is a well-defined distance function, the localization attenuates the impact of observation on variables further than a cutoff distance that is a tunable parameter. One of the commonly used methods to calculate the correction term is the Gaspari-Cohn polynomial with compact support that approximates a normal distribution \cite{Gaspari-Cohn}.

The SEC strategy in \cite{AndersonSEC} handles the case in which there is no well-defined distance between different components of variables, such as satellite radiance. The idea is to modify the sample correlation by multiplying a correction term that depends on sample correlation and ensemble size. The correction factor ranges from 0 to 1, which decreases the magnitude of the sample correlation unless components are perfectly correlated. 
The SEC method uses the fact that the ensemble Kalman update is related to linear regression. By assuming that the linear regression coefficient follows a normal distribution with an unknown mean and variance, the correction term is derived as a minimizer of the expected RMS difference between the corrected regression coefficient and the maximum likelihood estimate of the coefficient. The corrected coefficient turns out to minimize the expected RMS error in the updated ensemble mean. As the correction term depends only on the sample correlation and size, for a fixed ensemble size, the method precomputes a lookup table for a possible range of sample correlation values through an offline Monte-Carlo computation.

\begin{figure}[!ht]
\centering
\includegraphics[width=.45\textwidth]{./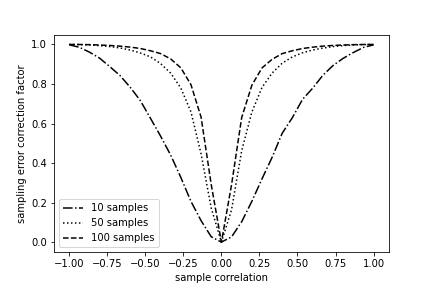}
\includegraphics[width=.45\textwidth]{./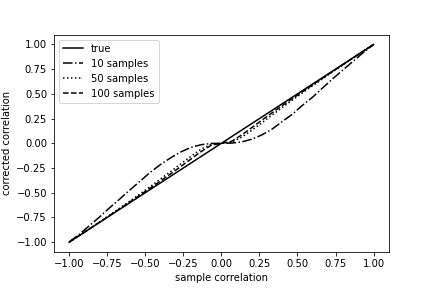}
\caption{Sampling error correction factors following \cite{AndersonSEC} (left) and their corresponding corrected correlations (right) for various sample sizes. }\label{fig:SEC}
\end{figure}
The left plot of Figure \ref{fig:SEC} shows the sampling error correction factor as a function of sample correlation and size following the method in \cite{AndersonSEC}, along with its corresponding corrected correlation value on the right plot. 
As the magnitude of the sample correlation decreases, the correction factor decreases monotonically from 1, and thus the SEC method yields a correlation weaker than the sample correlation. 
We note that as the ensemble size increases, the correction effect attenuates. In particular, the corrected correlation using 100 ensemble members is close to the sample correlation before correction. From this observation, we see that the sampling error correction is marginal for an ensemble size of an order of 100 or larger.

\subsection{Sampling error correction in ensemble Kalman inversion}\label{subsec:SECEKI}
It is straightforward to apply the SEC method in \cite{AndersonSEC} to EKI. For the sample covariances $C^{ug}_n$ and $C^{gg}_n$ in \eqref{eq:samplecovariance}, by suppressing $n$ for simplicity, we consider the following decompositions in the product form
\begin{equation}\label{eq:covdecomposition}
\begin{split}
C^{ug}=V^uR^{ug}V^g,\\
C^{gg}=V^gR^{gg}V^g
\end{split}
\end{equation}
where $V^u$ and $V^g$ are diagonal matrices with diagonal elements corresponding to the variance of each component of $u$ and $g$, respectively. $R^{ug}$ and $R^{gg}$ are the correlation matrices between $u$ and $g$
\begin{equation}
\begin{split}
\left(R^{ug}\right)_{ij}&=r^{ug}_{ij}=\mbox{sample correlation between } u_i \mbox{ and }g_j,\\
\left(R^{gg}\right)_{ij}&=r^{gg}_{ij}=\mbox{sample correlation between } g_i \mbox{ and }g_j.
\end{split}
\end{equation}

If $s=s(r, K)$ is the sampling error correction factor for sample correlation $r$ and ensemble size $K$, each element of $R^{ug}$ and $R^{gg}$ is corrected as 
\begin{equation}
r_{ij}\leftarrow s(r_{ij},K)r_{ij}.
\end{equation}
This modification yields sampling error-corrected correlation matrices $R^{ug}_{sec}$ and $R^{gg}_{sec}$ with $\left(R^{ug}_{sec}\right)_{ij}=s(r^{ug}_{ij},K)r^{ug}_{ij}$ and $\left(R^{gg}_{sec}\right)_{ij}=s(r^{gg}_{ij},K)r^{gg}_{ij}$. 
Using the corrected covariance matrices, the sampling error correction for EKI works like the standard EKI replacing the covariance matrices in \eqref{eq:samplecovariance} with the sampling error-corrected covariances $C^{ug}_{sec}$ and $C^{gg}_{sec}$ that are defined as
\begin{equation}
\begin{split}
C^{ug}_{sec}&=V^uR^{ug}_{sec}V^g,\\
C^{gg}_{sec}&=V^gR^{gg}_{sec}V^g.
\end{split}
\end{equation}

Our focus in this study is the design of the correction factor function so that the correction can be significant even for a moderate ensemble size. 
Motivated by the shape of the corrected correlation (the left plot of Figure \ref{fig:SEC}), we propose a power function as the correction factor function $s$ that depends only on the sample correlation $r$
\begin{equation}\label{eq:correctionftn}
s(r)=|r|^a, \quad a\geq 0,
\end{equation}
which weakens correlations for $a>0$ if the correlation is not perfect.
Instead of having dependence on the ensemble size, we leave the power $a$ as a tunable parameter that can account for both the ensemble size and the unknown variable's dimension.
Using the correction factor function \eqref{eq:correctionftn}, the sampling error-corrected correlation matrices $R^{ug}_{sec}$ and $R^{gg}_{sec}$ are given by multiplying the correction factor function to each element
\begin{equation}\label{eq:seccormatrix}
\begin{split}
\left(R^{ug}_{sec}\right)_{ij}=|r^{ug}_{ij}|^{a}r^{ug}_{ij},\\
\left(R^{gg}_{sec}\right)_{ij}=|r^{gg}_{ij}|^{a}r^{gg}_{ij}.
\end{split}
\end{equation}
We now have the complete sampling error correction algorithm for EKI.

\vspace{0.05\textwidth}
\begin{algorithm}\label{algo:SECEKI}
\textbf{EKI with sampling error correction}
\end{algorithm}
Assumption: an initial ensemble of size $K$, $\{u_0^{(k)}\}_{k=1}^K$ from prior information, is given. Also, the power $a$ for the sampling error correction factor function \eqref{eq:correctionftn} is given.\\
For $n=1,2,...,$
\begin{enumerate}
	\item Prediction step using the artificial dynamics:
	\begin{enumerate}
		\item Apply the forward model $G$ to each ensemble member 
\begin{equation}
g_n^{(k)}:=G(u_{n-1}^{(k)})
\end{equation}
		\item From the set of the predictions $\{g_n^{(k)}\}_{k=1}^K$, calculate the mean and covariances
\begin{equation}\label{eq:sec:samplemean}
\overline{g}_n=\frac{1}{K}\sum_{k=1}^Kg_n^{(k)},
\end{equation}
\begin{equation}\label{eq:sec:samplecovariance}
\begin{split}
C^{ug}_n&=\frac{1}{K}\sum_{k=1}^K(u_n^{(k)}-\overline{u}_n)\otimes(g_n^{(k)}-\overline{g}_n),\\
C^{gg}_n&=\frac{1}{K}\sum_{k=1}^K(g_n^{(k)}-\overline{g}_n)\otimes(g_n^{(k)}-\overline{g}_n),
\end{split}
\end{equation}
where $\overline{u}_n$ is the mean of $\{u_n^{(k)}\}$, i.e., $\displaystyle\frac{1}{K}\sum_{k=1}^Ku_n^{(k)}$.

	\item Using the product form decomposition of the covariance matrices \eqref{eq:covdecomposition}, calculate the sampling error-corrected covariance matrices
	\begin{equation}
	\begin{split}
	C^{ug}_{n,sec}&=V^uR^{ug}_{sec}V^g,\\
C^{gg}_{n,sec}&=V^gR^{gg}_{sec}V^g.
	\end{split}
	\end{equation}
	where $R^{ug}_{sec}$ and $R^{gg}_{sec}$ are given by \eqref{eq:seccormatrix}	\end{enumerate}

\item Analysis step:
	\begin{enumerate}
		\item Update each ensemble member $u_n^{(k)}$ using the Kalman update with the corrected covariance matrices
\begin{equation}\label{eq:sec:ensembleupdate}
u_{n+1}^{(k)}=u_{n}^{(k)}+C^{ug}_{n,sec}(C^{gg}_{n,sec}+\Gamma)^{-1}(y_{n}^{(k)}-g_n^{(k)}),
\end{equation}
where $y_{n+1}^{(k)}=y+\zeta_{n+1}^{(k)}$ is a perturbed measurement using Gaussian noise $\zeta_{n+1}^{(k)}$ with mean zero and covariance $\Gamma$.

		\item Compute the ensemble mean as an estimate for the solution $u$
		\begin{equation}
		\overline{u}_{n+1}=\frac{1}{K}\sum_{k=1}^Ku_n^{(k)}.
		\end{equation}
	\end{enumerate}
\end{enumerate}

\subsection{Violation of the invariant subspace property}
It is known for the ensemble Kalman update formula that the ensemble has an invariant subspace property. The ensemble updated through the ensemble Kalman update remains in the linear span of the initial ensemble \cite{EnKF}. This property can serve as a regularization in EKI \cite{EKI,TEKI}. However, when the ensemble size is smaller than the unknown variable's dimension, the invariance property requires an informed choice for the initial ensemble members so that the solution belongs to the linear span of the initial ensemble. As in the standard localization, we show that the ensemble updated by the sampling error correction for EKI does not necessarily satisfy the invariant subspace property.

\begin{thm}
Assume that the ensemble size is smaller than the unknown variable's dimension, i.e., $K<N$. 
The ensemble updated by the sampling error-corrected EKI belongs to the span of the previous ensemble if the sampling correlation $r^{ug}_{lm}$, the sample correlation between $\{u_l^{(k)}\}_{k=1,2,...,K}$ and $\{g_m^{(k)}\}_{k=1,2,...,K}$, is independent of $l$.
\end{thm}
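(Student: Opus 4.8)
The plan is to reduce the statement to a column-space containment and then read off the hypothesis as exactly the condition under which the entrywise correction preserves that containment.

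First I would isolate the update direction. For each $k$, the increment added to $u_n^{(k)}$ in \eqref{eq:sec:ensembleupdate} is $C^{ug}_{n,sec}\,(C^{gg}_{n,sec}+\Gamma)^{-1}(y_n^{(k)}-g_n^{(k)})$. Since $(C^{gg}_{n,sec}+\Gamma)^{-1}(y_n^{(k)}-g_n^{(k)})$ is a vector in $\mathbb{R}^M$, this increment lies in the range (column space) of $C^{ug}_{n,sec}$ for every $k$, regardless of what the corrected $C^{gg}_{n,sec}$ is. Because $u_{n+1}^{(k)}=u_n^{(k)}+(\text{increment})$ and $u_n^{(k)}$ already belongs to the previous ensemble, it suffices to prove $\mathrm{range}(C^{ug}_{n,sec})\subseteq \mathrm{span}\{u_n^{(k)}-\overline{u}_n\}_{k=1}^K$, the linear span of the centered previous ensemble.

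Next I would record two elementary facts. (i) From the product-form decomposition \eqref{eq:covdecomposition} and the correction \eqref{eq:seccormatrix}, each entry satisfies $(C^{ug}_{n,sec})_{lm}=(V^u)_{ll}\,s(r^{ug}_{lm})\,r^{ug}_{lm}\,(V^g)_{mm}=s(r^{ug}_{lm})\,(C^{ug}_n)_{lm}=|r^{ug}_{lm}|^{a}\,(C^{ug}_n)_{lm}$, since $V^u,V^g$ are diagonal and $(C^{ug}_n)_{lm}=(V^u)_{ll}\,r^{ug}_{lm}\,(V^g)_{mm}$. (ii) Writing $U$ for the $N\times K$ matrix with columns $u_n^{(k)}-\overline{u}_n$ and $W$ for the $M\times K$ matrix with columns $g_n^{(k)}-\overline{g}_n$, the uncorrected covariance is $C^{ug}_n=\frac{1}{K}UW^{\mathsf T}$, so $\mathrm{range}(C^{ug}_n)\subseteq\mathrm{range}(U)=\mathrm{span}\{u_n^{(k)}-\overline{u}_n\}$ — this is the standard invariant-subspace mechanism for EKI. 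The decisive step then uses the hypothesis: if $r^{ug}_{lm}$ is independent of $l$, write $r^{ug}_{lm}=\rho_m$, so the correction factor $|r^{ug}_{lm}|^{a}=|\rho_m|^{a}$ depends only on the column index $m$, and the identity in (i) becomes $C^{ug}_{n,sec}=C^{ug}_n D$ with $D=\mathrm{diag}(|\rho_1|^{a},\dots,|\rho_M|^{a})$. Right-multiplication by the $M\times M$ diagonal matrix $D$ cannot enlarge the column space, hence $\mathrm{range}(C^{ug}_{n,sec})\subseteq\mathrm{range}(C^{ug}_n)\subseteq\mathrm{span}\{u_n^{(k)}-\overline{u}_n\}$, which closes the argument.

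I expect the main obstacle to be conceptual rather than computational: correctly recognizing that the ensemble increment is confined to $\mathrm{range}(C^{ug}_{n,sec})$ and recasting ``stays in the span of the previous ensemble'' as a column-space inclusion. Once that reformulation is in place, the role of the hypothesis is transparent — $l$-independence of $r^{ug}_{lm}$ is exactly what lets the Hadamard-type correction factor out as a right diagonal multiplication rather than act as a genuine row-dependent reweighting. For the complementary intuition behind the section title, when $r^{ug}_{lm}$ genuinely depends on $l$ the per-row rescaling tilts the columns of $C^{ug}_{n,sec}$ out of the $K$-dimensional subspace $\mathrm{span}\{u_n^{(k)}-\overline{u}_n\}$, which is proper because $K<N$, so the invariant-subspace property generically fails.
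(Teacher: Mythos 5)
Your proposal is correct and rests on the same mechanism as the paper's own proof: the $l$-independence of $r^{ug}_{lm}$ makes the correction factor a function of the measurement index $m$ alone, so the update increment stays a linear combination of the previous ensemble members. Your factorization $C^{ug}_{sec}=C^{ug}D$ with $D$ diagonal is the matrix-level form of the paper's componentwise argument, which instead exhibits the spanning coefficients explicitly as $\alpha^{(k)}=\beta^{(k)}=\sum_{m}\gamma_{m}\tilde{g}_m^{(k)}w_m^{(j)}$ --- precisely the entries of your coefficient vector $\tfrac{1}{K}W^{\mathsf T}Dw^{(j)}$.
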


\begin{proof}
For simplicity, we suppress the iteration index $n$. Following the notations used in Algorithm \ref{algo:EKI}, $u^{(k)}$ is the $k$-th ensemble member and $g^{(k)}=G(u^{(k)})$ is the measurement value using the $k$-th ensemble member. Let $\tilde{u}^{(k)}$ and $\tilde{g}^{(k)}$ be the deviations of $u^{(k)}$ and $g^{(k)}$ from their means, respectively. That is, $\tilde{u}^{(k)}=u^{(k)}-\frac{1}{K}\sum_{k=1}^K u^{(k)}$ and $\tilde{g}^{(k)} = g^{(k)}-\frac{1}{K}\sum_k g^{(k)}$.
The $lm$ element of the covariance matrix $C^{ug}$, $C^{ug}_{lm}$, has the following representation 
\begin{equation}
C^{ug}_{lm}=\sum_{k=1}^{K}\tilde{u}_l^{(k)}\tilde{g}_m^{(k)}=\sum_{k=1}^{K}u_l^{(k)}\tilde{g}_m^{(k)},
\end{equation}
where $l$ and $m$ represent the vector component index of the corresponding vectors.
Note that the second equality comes from the fact that $\sum_{k=1}^{K}\tilde{g}_m^{(k)}=0$.

Following the ensemble update using SEC \eqref{eq:sec:ensembleupdate}, the $l$-th component of the $j$-th ensemble member update is
\begin{equation}\label{eq:}
u_l^{(j)}\leftarrow u_l^{(j)} + \sum_{k=1}^K\sum_{m=1}^{M} u_l^{(k)}\gamma_{lm}\tilde{g}_m^{(k)}\hat{w}^{(j)}_m, \quad j=1,2,...,K, l=1,2,...,N
\end{equation}
where $\gamma_{lm}=(r_{lm}^{ug})^a$, $a\geq 0$, and $w^{(j)}_m$ is the $m$-th component of $\left(C^{gg}_{sec}+\Gamma\right)^{-1}\left(y^{(j)}-g^{(j)}\right)$.

Let $\beta_{l}^{(k)}=\sum_{m=1}^M \gamma_{lm}\tilde{g}_m^{(k)}w^{(j)}_m$. The updated ensemble member belongs to the linear span of the previous ensemble if and only if there exists $\alpha=(\alpha^{(1)},\alpha^{(2)},...,\alpha^{(K)})\in\mathbb{R}^K$ such that
\begin{equation}\label{eq:systemsofeqs}
\sum_{k=1}^{K}u_l^{(k)}\beta_l^{(k)}=\sum_{k=1}^{K}u_l^{(k)}\alpha^{(k)}, \quad l=1,2,...,N.\end{equation}
If $r^{ug}_{lm}$ is independent of $l$, $\beta_l^{(k)}$ is also independent of $l$, and thus $\alpha^{(k)}=\beta^{(k)}$ is the solution of the above system of equations.
\end{proof}
Note that \eqref{eq:systemsofeqs} is a system of $N$ equations of $K$ unknowns, $\{\alpha^{(k)}\}$. If the ensemble size is larger than the dimension of $u$, that is, $K>N$, and $\{u^{(k)}\}$ has rank $N$, the SEC updated ensemble belongs to the linear span of the previous ensemble regardless of the $l$-independence of $r^{ug}_{lm}$. As the left-hand side of \eqref{eq:systemsofeqs}, $\sum_{k=1}^{K}u_l^{(k)}\beta_l^{(k)}$, is a vector in $\mathbb{R}^N$, and $\{u^{(k)}\}$ has rank $N$, we can find $\alpha$ that satisfies \eqref{eq:systemsofeqs}.
When $K<N$, there is no guarantee that the SEC updated ensemble satisfies the invariant subspace property. The sample correlation $r^{ug}_{lm}$ typically depends on $l$, and thus the above theorem shows that the updated ensemble using SEC does not necessarily belong to the linear span of the previous ensemble.
$\beta^{(k)}_l$ depends on $\{u_l^{(k)}\}$, and thus there may be a hidden structure that guarantees a solution of \eqref{eq:systemsofeqs} for any sample correlation $r^{ug}_{lm}$. We provide the following example to illustrate how EKI with SEC works and that such a hidden structure does not exist in general.

\subsection*{Example: EKI update using the sampling error correction in $\mathbb{R}^4$}
This example considers a case when $N=4$, $M=1$, and $K=3$. The forward model is given by $G(u)=u_1$, the first component of $u$, and the measurement $y$ is equal to 2 with an observation error variance $\sigma_o^2=\frac{7}{9}$.
Let the three ensemble members are given as follows
\begin{equation}
u^{(1)}=\begin{pmatrix}1\\-1\\0\\0\end{pmatrix}, u^{(2)}=\begin{pmatrix}0\\1\\1\\0\end{pmatrix}, \mbox{and }u^{(3)}=\begin{pmatrix}0\\0\\0\\1\end{pmatrix}
\end{equation}
Therefore, we have the following measurement values of the ensemble members,
\begin{equation}
g^{(1)}=1,g^{(2)}=0,\mbox{ and }g^{(3)}=0.
\end{equation}
The covariance matrix $C^{ug}$ is $\begin{pmatrix}\frac{2}{9}\\\frac{-1}{3}\\\frac{-1}{9}\\\frac{-1}{9}\end{pmatrix}$ while the other covariance matrix, $C^{gg}=Var(\{g^{(k)}\})$, is $\frac{2}{9}$, a scalar value.

The correlation matrix corresponding to $C^{ug}$ is $\begin{pmatrix}1\\\frac{-\sqrt{3}}{2}\\\frac{-1}{2}\\\frac{-1}{2}\end{pmatrix}$, and thus the sampling error-corrected covariance matrices with $a=1$ are
\begin{equation}
C^{ug}_{sec}=\begin{pmatrix}\frac{2}{9}\\\frac{-\sqrt{3}}{6}\\\frac{-1}{18}\\\frac{-1}{18}\end{pmatrix}\quad\mbox{and}\quad
C^{gg}_{sec}=C^{gg}=\frac{2}{9}.
\end{equation}

For the update of the first ensemble member, $u^{(1)}$, we have
\begin{equation}
u^{(1)}\leftarrow u^{(1)}+C^{ug}_{sec}(C^{gg}_{sec}+\sigma_o^2)^{-1}(y-G(u^{(1)}))=u^{(1)}+C^{ug}_{sec}
\end{equation}
as $(C^{gg}_{sec}+\sigma_o^2)(y-G(u^{(1)}))=1$. That is, the increment of $u^{(1)}$ is $C^{ug}_{sec}$. The updated $u^{(1)}$ belongs to the linear span of the previous ensemble if and only if the increment $C^{ug}_{sec}$ is in the same subspace spanned by the previous ensemble, which is equivalent to an existence of $\alpha^{(1)}, \alpha^{(2)}$, and $\alpha^{(3)}$ such that
\begin{equation}
\begin{pmatrix}\frac{2}{9}\\\frac{-\sqrt{3}}{6}\\\frac{-1}{18}\\\frac{-1}{18}\end{pmatrix}=C^{ug}_{sec}=\sum_k\alpha^{(k)}u^{(k)}
=\begin{pmatrix}1&0&0\\-1&1&0\\0&1&0\\0&0&1\end{pmatrix}\begin{pmatrix}a^{(1)}\\a^{(2)}\\a^{(3)}\end{pmatrix},
\end{equation}
which does not have a solution. Therefore, the updated ensemble does not belong to the space spanned by the previous ensemble.

\section{Numerical experiments}\label{sec:numerical}
In this section, we apply EKI with SEC to a suite of numerical experiments to check the effectiveness of SEC for EKI with a small ensemble size. The tests include: i) a toy problem with the identity forward model, ii) compressive sensing, iii) image deblurring, iv) state estimation of the Lorenz 96 model and v) a PDE-constrained inverse problem. 
The first three problems have linear forward models, and thus other optimization methods, such as convex optimization methods, can perform well for those problems. Our focus is to compare the effect of SEC in EKI rather than advocating the use of EKI (with SEC) for the test problems considered here. We refer to \cite{lpEKI} for comparing EKI and a convex optimization method for a compressive sensing test.

To run each test, the ensemble is initialized from a Gaussian distribution with a diagonal covariance matrix (we will specify the mean and the variance later for each experiment). 
Each experiment uses several ensemble sizes that are larger than and smaller than the dimension of $u$. The sampling error correction power $a$ in \eqref{eq:correctionftn}, a tunable parameter, is chosen to provide the best result. 
When the measurement dimension, $M$, is comparable to the dimension of $u$, $N$, we apply EKI with SEC in Algorithm \ref{algo:SECEKI}. If $M<N$, the inverse problem is ill-posed, and a regularization plays a crucial role in being well-posed. In this study, we apply SEC to the $l_p$ regularized EKI \cite{lpEKI} (see the appendix for the description of $l_p$EKI with SEC used in this section). In a regularized inverse problem, the regularization coefficient ($\lambda$ in \eqref{eq:lpinv}), which determines the strength of the regularization penalty term, is crucial to obtain robust performance. It is possible to estimate the regularization coefficient, for example, using cross-validation. To minimize the effect of such estimation for the coefficient, we find the regularization by hand-tuning, which yields the best performance. In particular, we tune the regularization coefficient for the large ensemble size and use the same parameter for the small ensemble size case with SEC to check the effect of SEC directly.

In addition to covariance localization, covariance inflation has been an effective method to handle several errors in ensemble filters, such as model error and sampling error. 
The effect of the inflation in EKI has also been investigated, including adaptive inflation in the continuous-time limit \cite{analysisEKI}. To minimize the combinatorial complexity in testing the effects of various inflation methods, we focus on SEC without inflation in this section. We leave the in-depth investigation of the performance of the combined inflation and localization as future work.

\subsection{A toy problem with the identity forward model}
The first problem is an optimization problem for $u\in\mathbb{R}^{100}$
\begin{equation}\label{eq:test:toy}
\argmin_{u\in\mathbb{R}^{100}} \|u-\mathbbm{1}\|_2^2
\end{equation}
where $\mathbbm{1}=(1,1,...,1)\in\mathbb{R}^{100}$. This optimization problem is equivalent to solving \eqref{eq:inverseproblem} where the forward model $G$ is the identity map, $y=\mathbbm{1}$, and $\eta$ is Gaussian with mean zero and a diagonal covariance. The observation error variance is set to $0.1$. We set the initial ensemble mean to $(0,1,1,...,1)$ so that only the first component is different from the true solution $\mathbbm{1}$. The initial ensemble variance is 0.1 for all components. If a gradient-based method is applied to solve \eqref{eq:test:toy}, only the first component will be updated over iterations as the other components are already stationary. However, in EKI without SEC, the other components can fluctuate over iterations due to the spurious correlations related to the first component.

\begin{figure}[!ht]
\centering
\includegraphics[width=.44\textwidth]{./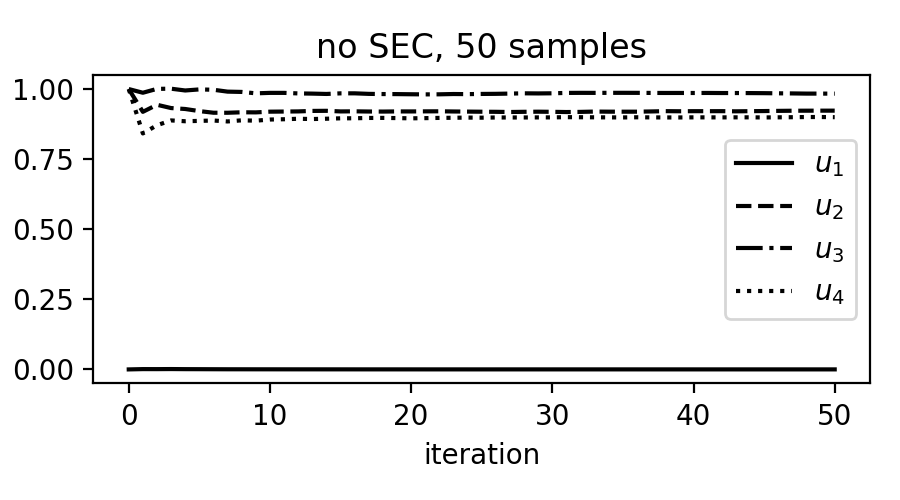}
\includegraphics[width=.44\textwidth]{./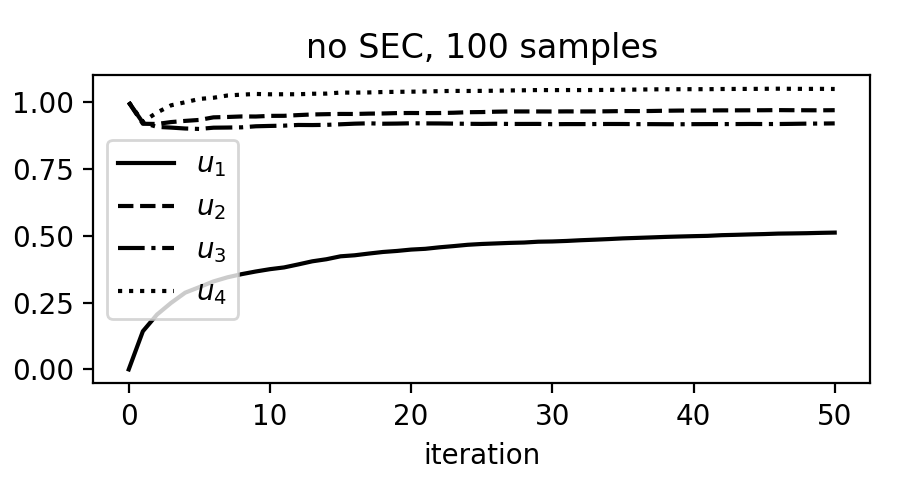}\\
\includegraphics[width=.44\textwidth]{./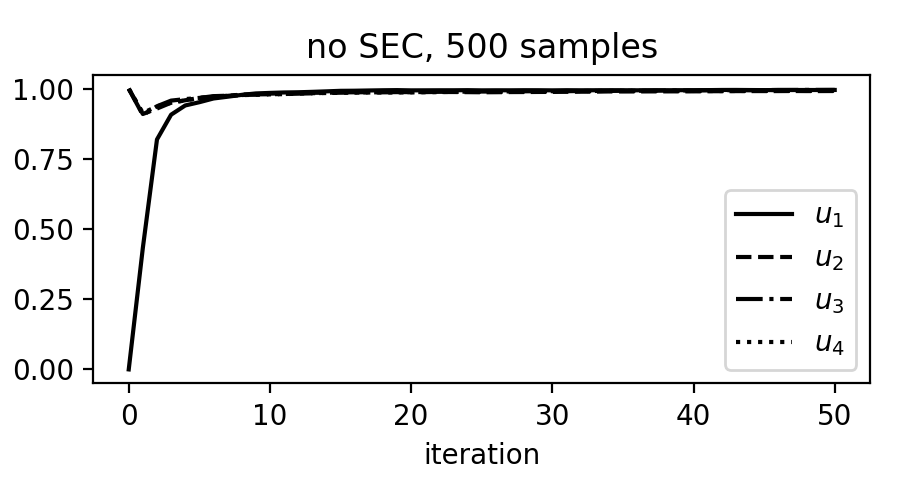}
\includegraphics[width=.44\textwidth]{./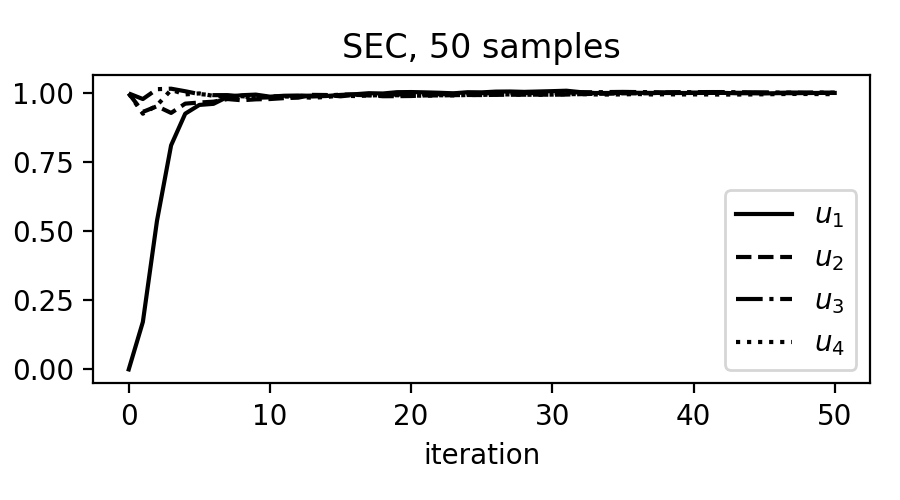}
\caption{Estimates of $u_i, i=1,2,3,4,$ for the toy problem with the identity forward model. Top left, top right, and bottom left correspond to EKI without SEC using 50, 100, and 500 samples, respectively. Bottom left is the result of EKI with SEC using 50 samples.}
\label{fig:test:toy:estimate}
\end{figure}
Figure \ref{fig:test:toy:estimate} shows the EKI estimates of the first, second, third, and fourth components of $u$, $u_1,u_2,u_3,u_4$, over iterations using several ensemble sizes 100 and 50 without SEC and an ensemble size 50 with SEC (bottom left). 
If the ensemble size is 50, which is smaller than the dimension of $u$, the $u_1$ estimate by EKI without SEC does not capture the correct value. The other components are affected by $u_1$, and they deviate from the true value although they started from the true value. When the ensemble size increases, the performance of EKI improves. The top right and bottom left plots correspond to EKI without SEC using 100 and 500 ensemble members. When the ensemble size is 100 that equals the dimension of $u$, the $u_1$ estimate moves toward 1, but the convergence rate is slow. If the ensemble size increases to 500, the $u_1$ estimate quickly converges to the true value while the other components make more minor deviations from the true value. This result shows that the sampling error due to a small ensemble size significantly degrades the performance of EKI. The bottom left plot of Figure \ref{fig:test:toy:estimate} shows the EKI result with SEC. Using only 50 ensemble members, the $u_1$ estimate by EKI with SEC converges to the true value. Including $u_1$, all components converge to the true values within ten iterations. In this test, as the forward model is the identity map that does not mix different components as a measurement, the ideal covariance matrix for a large ensemble limit would also be diagonal (as there will be no correlation between other components). If the power of the correction factor function $a$ is large, all sample correlation values with a magnitude less than one will decrease in their magnitudes, which approximates the diagonal matrix for the covariance. In our experiment, any power greater than 0.8 provides a robust result (the bottom left plot uses $a=1$).

\subsection{Compressive sensing}
The second test is a compressive sensing problem to recover an unknown vector $u\in\mathbb{R}^{100}$. The forward model is given by a $30\times 100$ matrix applied to $u$
\begin{equation}
G(u)=Au,
\end{equation}
which yields a measurement $y$ in $\mathbb{R}^{30}$. The matrix $A$ is a random Gaussian matrix drawn from the standard normal distribution. 
The current test is different from the previous test in several aspects. First, the measurement has mixed contributions from different components due to the random matrix $A$, which results in non-trivial correlations between other components. For this reason, it is not straightforward to define a distance between the components of the measurement and $u$ for covariance localization. Additionally, the measurement's dimension is smaller than the dimension of $u$, which requires an appropriate regularization to obtain a well-posed problem and a corresponding unique solution. The true signal has only four nonzero components out of 100 components. We use the $l_1$ regularization to recover the sparse structure of $u$ using $l_p$EKI with SEC (see Appendix for the algorithm of $l_p$EKI with SEC). 

A similar setup has been used in \cite{lpEKI} to test the effect of several ensemble sizes in EKI without SEC. The idea used in \cite{lpEKI} for a small ensemble size is the multiple batch run \cite{batch} that eliminates unnecessary components after each batch. Instead of the multiple batch strategy that works for only sparse recovery problems, we test the effect of SEC to recover a sparse signal without the multiple batch run.
The ensemble is initialized with mean zero and variance 1 for all components. Also, the measurement error variance is set to $10^{-2}$. The regularization coefficient, $\lambda$, is tuned based on the result of EKI without SEC, which yields $\lambda=50$. For the EKI with SEC, we use the same regularization coefficient without adjusting to check the performance difference when SEC is applied.

The top row of Figure \ref{fig:test:cs:estimate} shows the EKI result without SEC using 2000 samples. The left column is the $u$ estimate, and the right column shows the time series of the $l_1$ error and the data misfit. The ensemble size 2000 is large enough that the EKI estimate without SEC captures the three most significant components of $u$. The minor magnitude component missed by EKI is challenging to capture; the magnitude is of an order of 0.1 that is comparable to the measurement standard deviation. If the ensemble size decreases to 50, the EKI estimate without SEC degrades. EKI barely captures the most significant component while all the other components are missed (second row of Figure \ref{fig:test:cs:estimate}). Also, the estimate show fluctuations in the zero components of the true signal. The EKI result with SEC using an ensemble size 50 is shown in the bottom row of Figure \ref{fig:test:cs:estimate}. The power for the sampling error correction factor function is $a=1$. Compared with the second row, which does not use SEC, the result with SEC shows performance comparable to the large ensemble size case at the cost of a weaker magnitude in the second most significant component. Also, the convergence rate is slower than the large ensemble case, but all estimates converge within 15 iterations.

\begin{figure}[!ht]
\centering
\includegraphics[width=.95\textwidth]{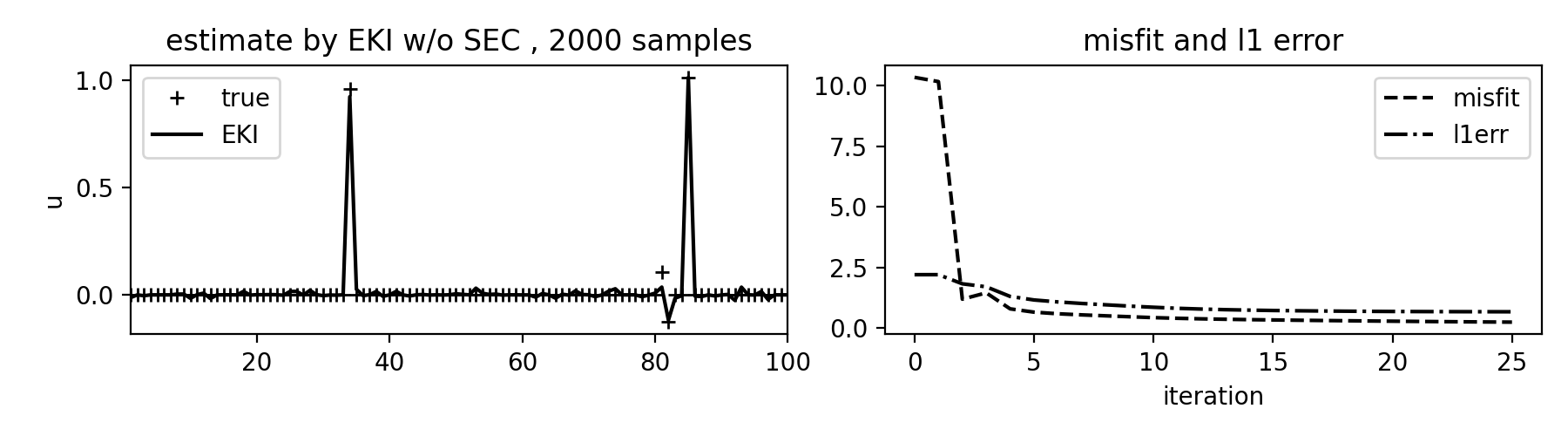}\\
\includegraphics[width=.95\textwidth]{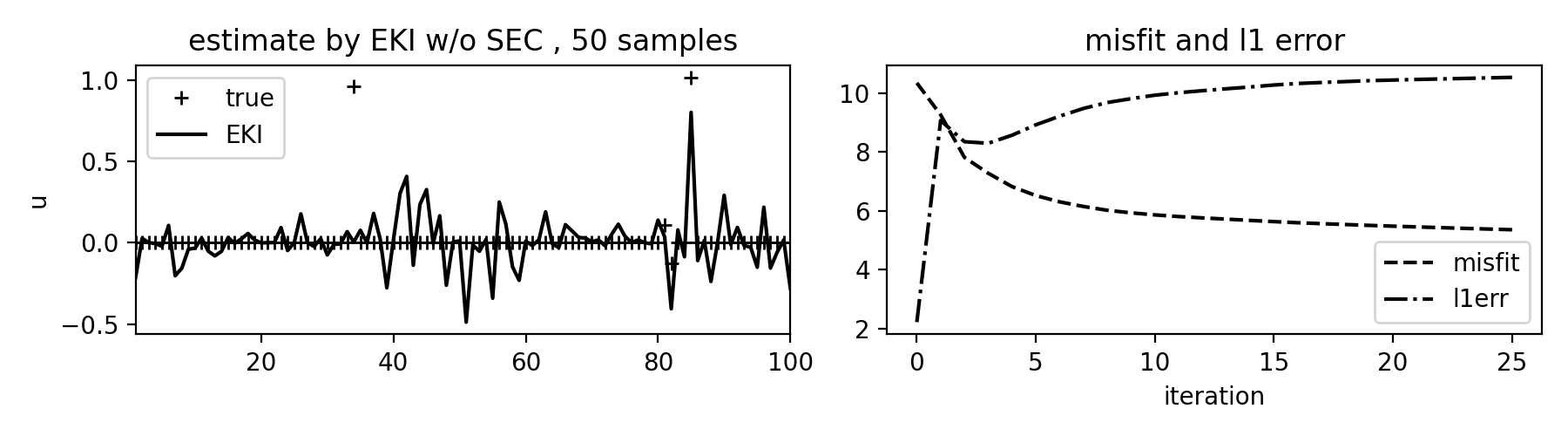}\\
\includegraphics[width=.95\textwidth]{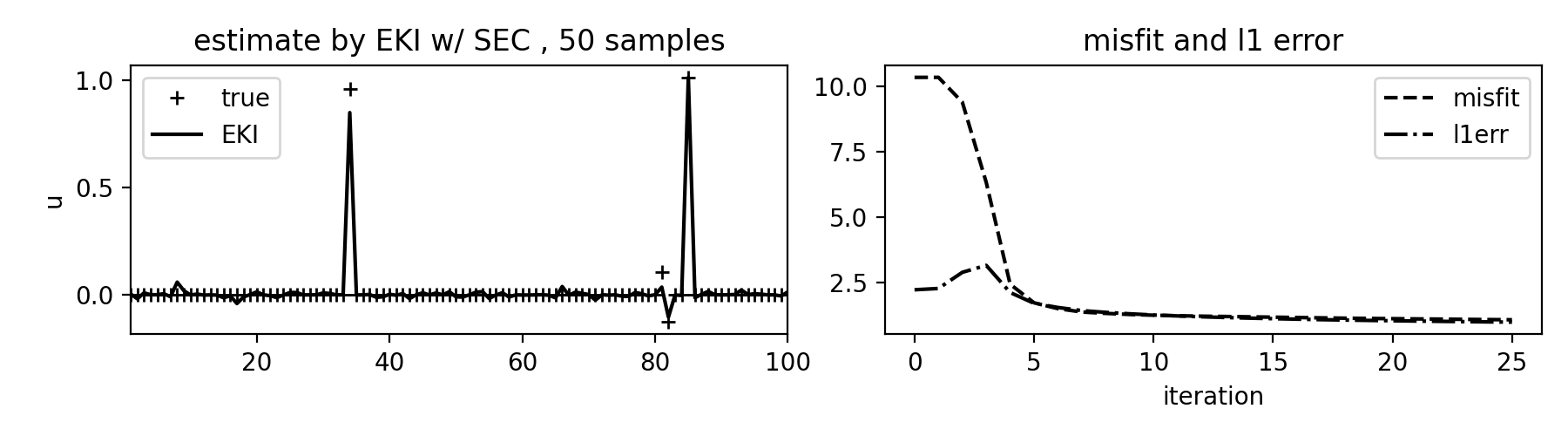}
\caption{Compressive sensing using EKI with $l_{1}$ regularization. Top: no SEC with 2000 samples. Middle: no SEC with 50 samples. Bottom: SEC with 50 samples. Left: $u$ estimates after 20 iterations. Right: time series of $l_1$ error and data misfit.}
\label{fig:test:cs:estimate}
\end{figure}

\subsection{Image deblurring}
We consider one more inverse problem with a linear forward model, image deblurring. The forward model is a Gaussian filter with a standard deviation set to 0.7. The measurement error variance is relatively small, $10^{-4}$, to focus on the deblurring performance of EKI. The true signal is the cameraman image of size $128\times 128$, and the measurement is also an image of the same size. Thus, we have $N=128^2$ and $M=128^2$. We use the discrete-time EKI without $l_p$ regularization, and the ensemble is initialized from a Gaussian distribution with mean zero and variance $2\times 10^{-4}$.

\begin{figure}[!ht]
\centering
\includegraphics[width=1\textwidth]{./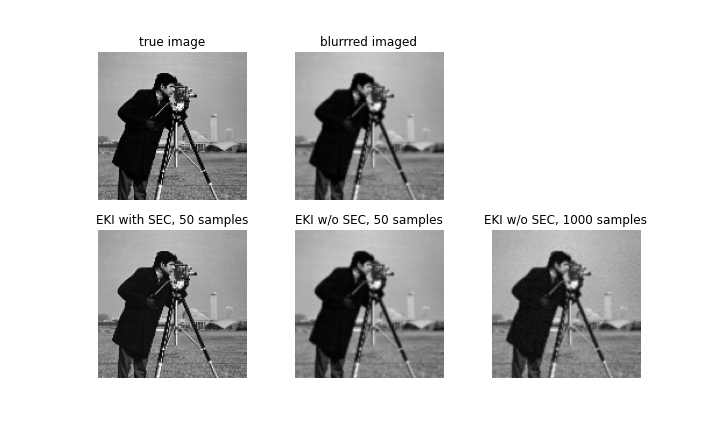}
\caption{Image deblurring of an 128x128 image. Top left: true image. Top center: measurement. Bottom left: EKI with SEC, 50 samples. Bottom center: EKI without SEC, 50 samples. Bottom right: EKI without SEC, 1000 samples.}\label{fig:test:deblurring}
\end{figure}
The top row of Figure \ref{fig:test:deblurring} shows the true and the blurred images. The bottom row includes the results of EKI with and without SEC for several ensemble sizes. The bottom left plot is EKI with SEC using 50 samples, which shows the best result. The sampling error correction power $a$ is set to $3$ to obtain the result. The other plots on the bottom are EKI without SEC using 50 (middle) and 1000 (right) samples after 25 iterations. Without SEC, the large sample case 1000 is not performing well; the ensemble size 1000 is still smaller than the dimension of the image $128^2$. On the other hand, EKI with SEC shows output with a significant deblurring effect using only 50 ensemble members.

\begin{figure}[!ht]
\centering
\includegraphics[width=1\textwidth]{./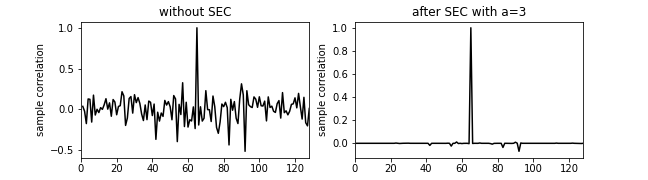}
\caption{Sample correlation between the pixel value at $(64,64)$ and $(i,64), i=1,2,...,128$. Left: without SEC. Right: after SEC with $a=3$.}
\label{fig:test:deblurring:sampcor}
\end{figure}
The sample correlations between the pixel fixed at $(64,64)$ and other pixels at $(i,64),i=1,2,...,128)$ are shown in Figure \ref{fig:test:deblurring:sampcor} alongside its corrected correlation using $a=3$ for the correction factor function. Without SEC (left plot), many of the pixels have non-trivial correlations with the pixel at $(64,64)$, and thus the measurement at $(64,64)$ affects many other pixels. On the other hand, when SEC applies to the sample correlation with $a=3$, all sample correlations except the same pixel attenuate. The sampling error correction using $a=3$ is equivalent to the standard localization with a very short cutoff distance, which is 1 in this test.

\subsection{Lorenz 96 with Fourier measurements}
We turn our interest to nonlinear forward models. The first nonlinear forward model problem is to estimate the initial value of a nonlinear dynamical system from a measurement of the state variable at a future time. The dynamical system we are interested in is the Lorenz 96 model, a standard model for a chaotic system with applications in geophysical turbulent flows \cite{Lorenz96}. In particular, we consider the 40-dimensional Lorenz 96 model with $F=8$ with the periodic boundary condition
\begin{equation}\label{eq:test:l96}
\frac{dx_i}{dt}=(x_{i+1}-x_{i-2})x_{i-1}-x_i+F, \quad x(0)=x_0.
\end{equation}
The measurement values are the Fourier components ($\sin$ and $\cos$ coefficients) of the state variable $x$ at $t=0.5$ except the largest and the second-largest wave components. Therefore, the forward model is the nonlinear map from the initial value $x_0$ to the first eighteen wave components of $x$ at $t=0.5$. Note that there is no well-defined distance between the Fourier components and the initial value. As the Fourier measurement is incomplete (missing the two largest wave components), we cannot use the inverse Fourier transform to define a distance between the measurement and the initial value. That is, the standard localization is not applicable in this test.

We use a fourth-order Runge-Kutta method with a time step 0.01 to compute the state variable $x$ at $t=0.5$. 
The measurement error variance is set to a relatively small value $10^{-2}$ to focus on the nonlinear behavior of the forward model. The ensemble is initialized around 0 with variance 1. As the measurement dimension, $M=36$, is smaller than the dimension of the initial value, $N=40$, we use the $l_p$EKI with $p=2$, which is equivalent to Tikhonov EKI \cite{TEKI}. The hand-tuned regularization coefficient $\lambda$ is 0.1.

To check the effect of SEC in EKI, we compare the estimates of $x_0$ by EKI without SEC using 1000 and 30 samples and EKI with SEC using 30 samples shown in Figure \ref{fig:test:l96:estimate} along with the time series of the $l_1$ error and data misfit. The large sample case without SEC has the best result. The $l_1$ error and the data misfit monotonically decrease with limits 45.51 and 8.65, respectively. The superior performance of the large sample case is expected; 1000 sample is 25 times larger than the dimension of $x_0$. The second-best result is the EKI with SEC using 30 samples and the sampling error correction power $a=1$. The $l_1$ error increases at the beginning period but quickly decreases down to 47.15, which is marginally worse than the large size case. The EKI without SEC using 30 samples, on the other hand, does not show a robust estimation result. The $l_1$ error increases and converges to 123, which is worse than the initial guess error. The converged data misfit is smaller than the initial misfit but is still worse than the other two methods. 
\begin{figure}[!htbp]
\centering
\includegraphics[width=.8\textwidth]{./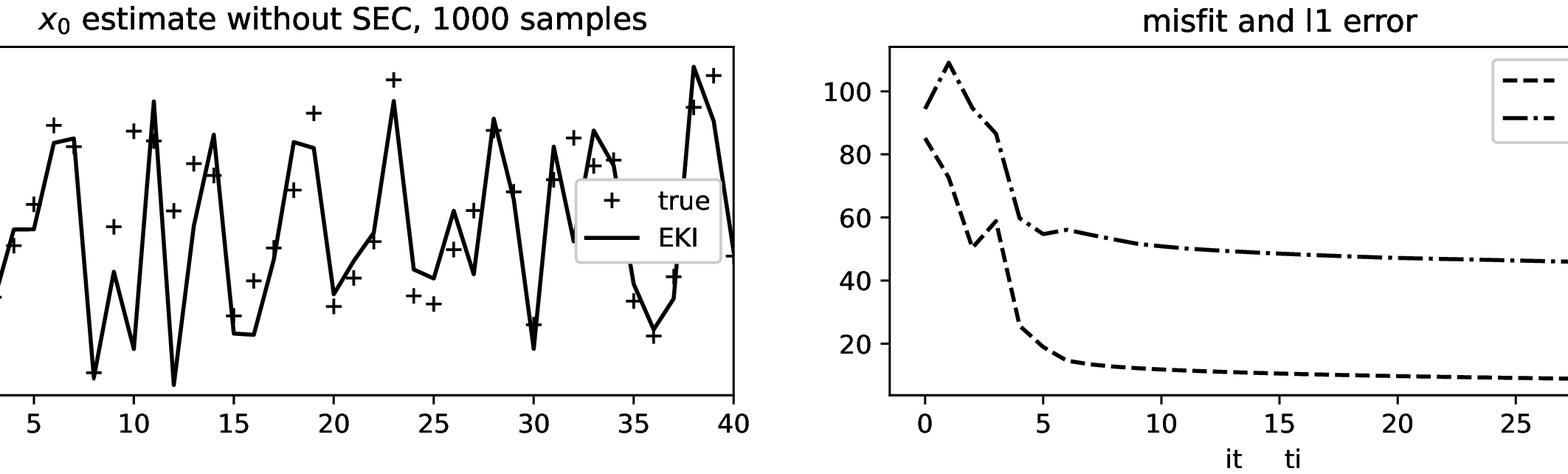}\\
\includegraphics[width=.8\textwidth]{./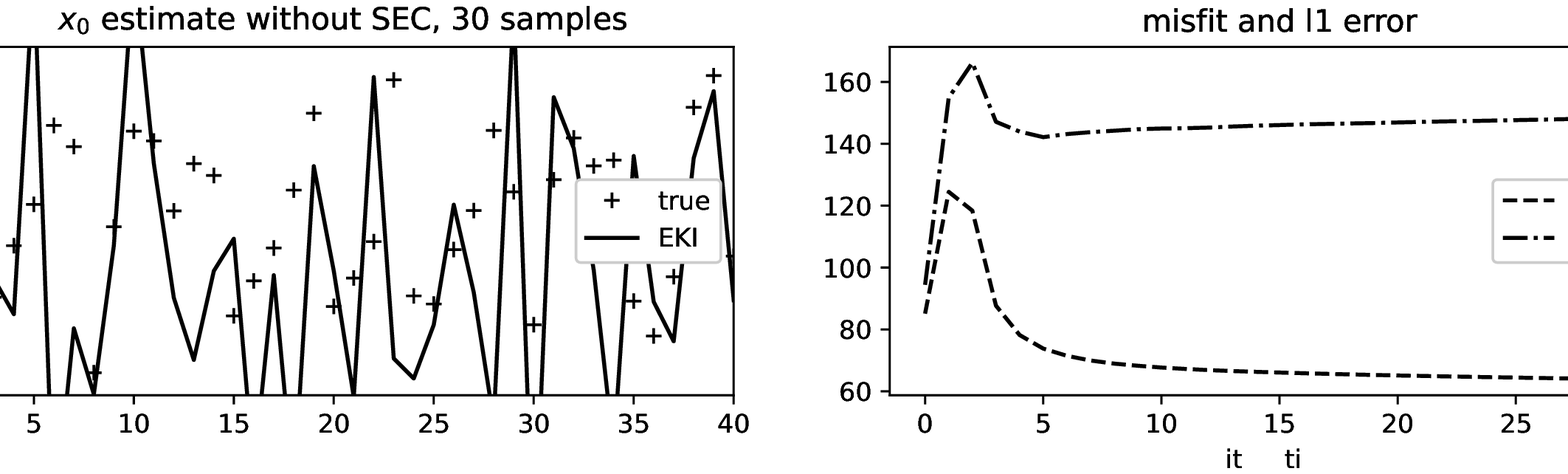}
\includegraphics[width=.8\textwidth]{./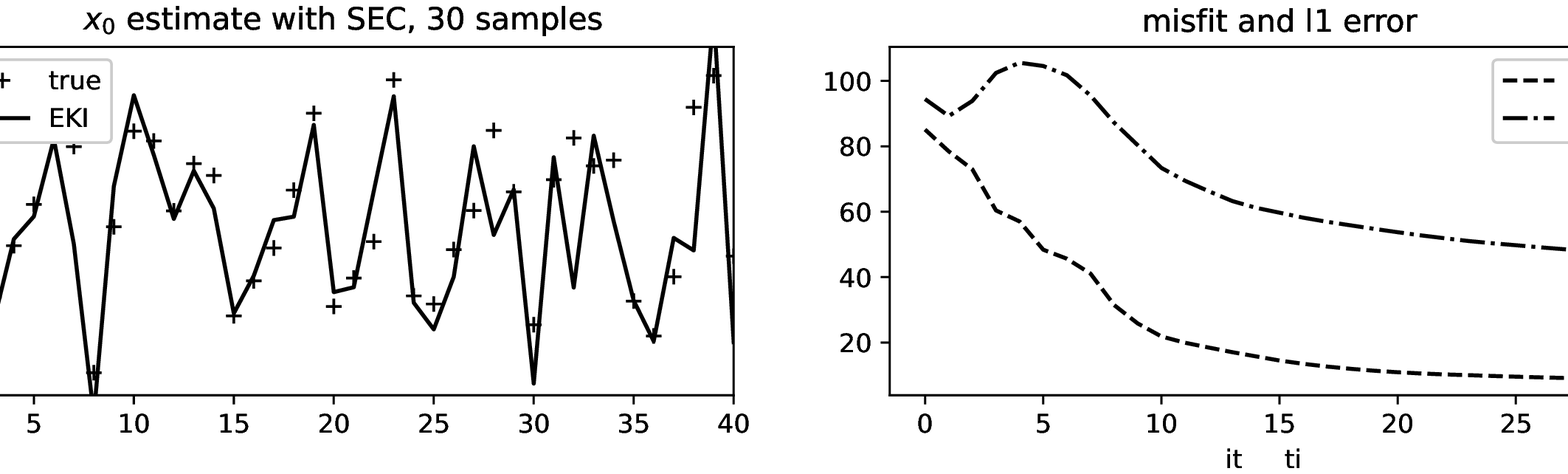}\\
\caption{Estimation of the initial value of the Lorenz 96 from partial Fourier measurement of a future state. Top and middle: EKI without SEC using 1000 and 30 samples. Bottom: EKI with SEC using 50 samples.}
\label{fig:test:l96:estimate}
\end{figure}





\subsection{2D elliptic inverse problem with discontinuous coefficient}
The last test is an inverse problem in which the forward model is related to solving an elliptic PDE. The following elliptic model describes the subsurface flow described by the Darcy's law in the two-dimensional unit square $\Omega=(0,1)^2\subset\mathbb{R}^2$
\begin{equation}\label{eq:test:2d}
-\nabla \cdot (k(x)\nabla p(x)) = f(x), \quad x=(x_1,x_2)\in(0,1)^2.
\end{equation}
Here $k(x)\geq\alpha>0$ is a scalar permeability field, and $p(x)$ is the pressure field (or the piezometric head) of the flow. The right hand side $f(x)$ is related to an external source. Our interest is to estimate the permeability field $k(x)$ from partial and noisy measurements of the pressure field, which is a standard inverse problem setup in oil reservoir simulations. 

We follow a similar setup in \cite{lpEKI} for the source term and the boundary condition (see \cite{2dsetup} for a physical motivation of this setup). First, the source term is piecewise constant in the $y$-direction
\begin{equation}
f(x_1,x_2)=\left\{
\begin{array}{ll}
0&\mbox{if } 0\leq x_2\leq \frac{4}{6}\\
137&\mbox{if }\frac{4}{6}< x_2\leq \frac{5}{6},\\
274&\mbox{if }\frac{5}{6}<x_2\leq 1.
\end{array}\right.
\end{equation}
The boundary condition is mixed with Dirichlet and Neumann conditions
\begin{equation}p(x_1,0)=100,  \frac{\partial p}{\partial x_1}(1,x_2)=0, -k\frac{\partial p}{\partial x_1}(0,x_2)=500,  \frac{\partial p}{\partial x_2}(x_1,1)=0,
\end{equation}
The measurement is the pressure value at $20\times 20$ uniformly located grid points with a measurement error variance $10^{-6}$ that is uncorrelated between different locations. When a permeability field $k(x)$ is given, we use the FEM with the second-order polynomial basis on a $50\times 50$ uniform mesh. To guarantee that the permeability is positive, we use the log permeability $\log p(x)$ as the unknown variable $u$ of the inverse problem. 

The true log-permeability is a square-shaped discontinuous function with only two values, 0 and 1 (top left plot of Figure \ref{fig:test:2dpermeability}). As an initial guess, we use the value obtained by applying a Gaussian filter with a standard deviation of 5 to the true value (top right plot of Figure \ref{fig:test:2dpermeability}). The ensemble is initialized with the blurred field as the mean, and the ensemble variance is set to $10^{-3}$. The measurement's dimension, $20\times 20=400$, is much smaller than the dimension of the unknown variable, $50\times 50=2500$. As the solution shows a sparse structure in the gradient space, a TV penalty term will be appropriate to regularize the inverse problem. However, there is no TV regularization implementation in EKI yet, and thus we use the $l_1$ regularization for $u$ using $l_p$EKI. 

The second, third, and fourth rows of Figure \ref{fig:test:2dpermeability} show the EKI estimates of $u$ using various ensemble sizes with and without SEC (the SEC results are in the right column). Using 2000 samples (second row), SEC does not make a significant difference. Both estimates show comparable results capturing edges on the top and left of the permeability field while the right and bottom parts show more noisy edges. The performance degrades as the sample size decreases. Without SEC, EKI loses all sharp edges with noisy boundaries. 
As the ensemble size decreases, EKI with SEC shows superior performance than the EKI without SEC. Except for the bottom right corner, the EKI with SEC shows edges of the square region more clear than EKI without SEC. The effect of the sampling error correction power $a$ is sensitive in this test compared to the other tests. The optimal power $a$ is $0.2$ obtained by trying values from 0 to 1 spaced by 0.1 (the power was tuned only for the 300 sample case and used the same value for the other sample sizes).

\begin{figure}[!htbp]
\centering
\includegraphics[width=.45\textwidth]{./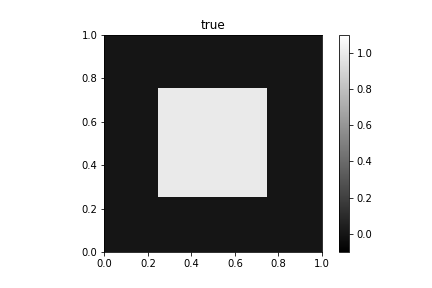}
\includegraphics[width=.45\textwidth]{./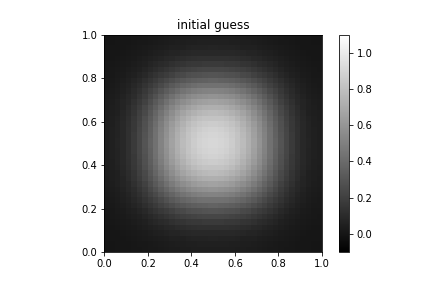}\\
\includegraphics[width=.45\textwidth]{./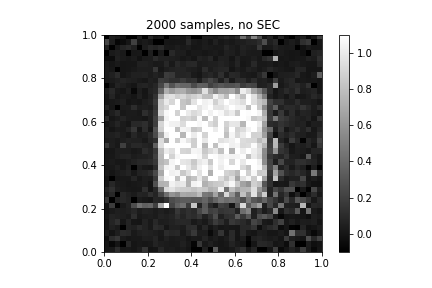}
\includegraphics[width=.45\textwidth]{./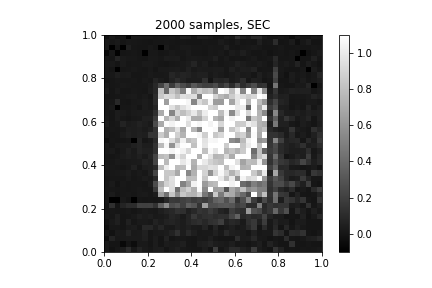}\\
\includegraphics[width=.45\textwidth]{./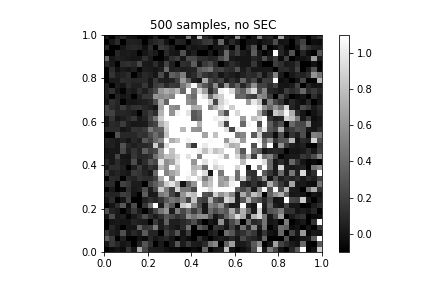}
\includegraphics[width=.45\textwidth]{./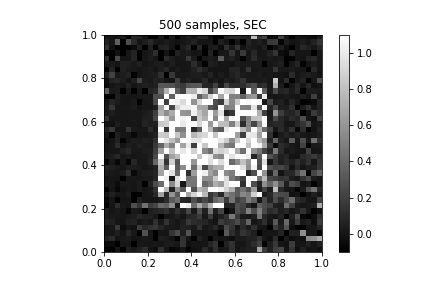}\\
\includegraphics[width=.45\textwidth]{./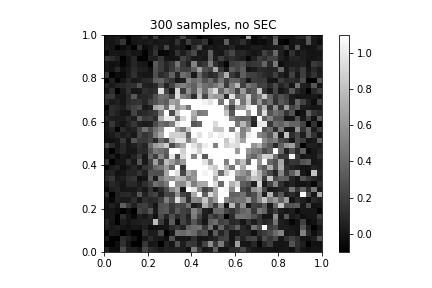}
\includegraphics[width=.45\textwidth]{./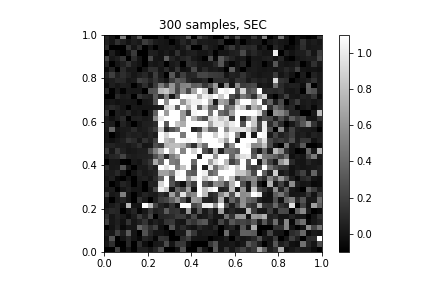}
\caption{Log-permeability field. Top row: true and the initial guess. 2nd row: EKI with samples. 3rd row: EKI with 500 samples. 4th row: EKI with 300 samples. Starting from the 2nd row, left column: without SEC. Right column: with SEC.}
\label{fig:test:2dpermeability}
\end{figure}

\section{Discussions and conclusions}\label{sec:conclusions}
This study proposed a sampling error correction (SEC) method to enable a small ensemble size in ensemble Kalman inversion (EKI). We validated the effectiveness and robustness of the proposed method through a series of inverse problems, including several standard linear forward model problems and nonlinear forward model problems. The proposed SEC method does not require a well-defined distance to determine the impact of measurement on updating an unknown variable used in the standard localization. Instead, the SEC method uses only the sample correlation value to correct the statistical deficiency induced by small sample size. The SEC method for EKI uses a power function \eqref{eq:correctionftn} to attenuate the magnitude of the sample correlation that is not perfectly correlated. Similar to the localization in ensemble Kalman filters, we showed that the SEC strategy modifies the ensemble so that the ensemble does not necessarily satisfy the invariant subspace property of the ensemble update. Due to this violation of the invariance property, the choice of an initial ensemble is less critical than the EKI without SEC, where the solution must belong to the linear span of the initial ensemble.

The proposed method needs to specify a parameter, $a$ of \eqref{eq:correctionftn}, to obtain robust estimation results. In the current study, we only tested hand-tuned values. This parameter can be estimated in advance using other techniques, such as cross-validation. We leave the investigation of this direction as future work. It would also be natural to investigate different types of SEC. For example, the localization method based on observation sensitivity matrices used in a petroleum reservoir application \cite{EmerackReynolds} is worth further investigation in the context of SEC for EKI.

We also considered only a discrete-time EKI. Other variants of EKI, such as the continuous-time limit and its adaptive time-stepping, have shown robust results in many stringent test problems along with their computational efficiency in some instances \cite{analysisEKI,TEKI}. Moreover, we used the ensemble Kalman filter using the perturbed measurement, leaving other ensemble-based Kalman filters, such as ensemble square-root filters, as future work. We plan to study SEC's application in different variants of EKI and compare the performance, which will be reported in another place. The covariance inflation and localization are the two most popular methods to handle statistical deficiencies for the ensemble-based Kalman filters. In most of the numerical experiments tests here, we were able to see the performance increase using SEC without inflation. In our numerical experiments, we found certain problems requiring both inflation and SEC to obtain robust performance for EKI (for example, the Lorenz 96 test requires inflation along with SEC when the measurement size is small). It would be interesting to investigate the interplay of inflation and SEC to improve the performance of EKI.

\subsection*{Acknowledgements}
The author is supported by NSF DMS-1912999 and ONR MURI N00014-20-1-2595.

\setcounter{section}{1}
\section*{Appendix: $l_p$-regularized EKI}
The idea of the $l_p$-regularized EKI ($l_p$EKI) is to solve the Tikhonov EKI \cite{TEKI} through a transformation of the variable $u$. 
For a fixed $p>0$, we define a transformation $\Psi:\mathbb{R}^N\to\mathbb{R}^N$
\begin{equation}\label{eq:vtou}
\Psi(u)=(\psi(u_1),\psi(u_2),...,\psi(u_N))
\end{equation}
where $\psi:\mathbb{R}\to\mathbb{R}$ is defined as
\begin{equation}
\psi(x)=\mbox{sgn}(x)|x|^{\frac{2}{p}}.
\end{equation}
Using the transformed variable $v=\Psi(u)$, $l_p$EKI uses the Tikhonov EKI \cite{TEKI} to solve the following $l_2$-regularized optimization problem
\begin{equation}\label{eq:l2inv}
\mbox{argmin}_{v\in \mathbb{R}^N}\;\lambda\|v\|_2^2+\|y-\tilde{G}(v)\|^2_{\Gamma},
\end{equation}
where $\tilde{G}$ is the pullback of $G$ by $\Xi:=\Psi^{-1}$ (note that $\Psi$ is invertible)
\begin{equation}
\tilde{G}=G\circ \Xi, \quad \Xi=\Psi^{-1}.
\end{equation}
From the definition of $\Psi$, it is straightforward to check that \eqref{eq:l2inv} is equivalent to 
\begin{equation}\label{eq:lpinv}
\mbox{argmin}_{u\in \mathbb{R}^N}\;\lambda\|u\|_p^p+\|y-G(u)\|^2_{\Gamma}.
\end{equation}
This equivalence holds for any $p>0$ but there is lower bound on $p$ due to numerical stability. If $p$ is too small, $\xi_p$ can blow up due to a large exponent $\frac{2}{p}$. 

The main idea to solve the Tikhonov regularization is to incorporate an augmented measurement vector $z\in\mathbb{R}^{N+M}$
\begin{equation}
z=(y,0)
\end{equation} 
along with an augmented forward model $F(v):\mathbb{R}^{N}\to\mathbb{R}^{N+M}$
\begin{equation}
F(v)=(\tilde{G}(v),v).
\end{equation}
Note that there is no computational cost increase in the augmented forward model. Using this augmented system, we have an inverse problem of estimating $v\in\mathbb{R}^N$ from $z\in\mathbb{R}^{N+M}$
\begin{equation}\label{eq:augobs}
z=F(v)+\zeta
\end{equation}
where $\zeta$ is a $N+M$-dimensional measurement error for the augmented system, which is Gaussian with mean zero and covariance
\begin{equation}\label{eq:augmentedcovariance}
\Sigma=\begin{pmatrix}\Gamma&0\\0&\frac{1}{\lambda}I_{N}\end{pmatrix}
\end{equation}
for the $N\times N$ identity matrix $I_N$.

The estimation of $u$ from $z$ can be done by solving the following optimization problem using the standard EKI
\begin{equation}\label{eq:auginv}
\mbox{argmin}_{u}\|z-F(v)\|_{\Sigma}^2,
\end{equation}
which is equivalent to the Tikhonov regularization problem \eqref{eq:l2inv}. Once \eqref{eq:auginv} is solved with a solution, say $v^{\dag}$, the minimizer of the $l_p$ regularization problem \eqref{eq:lpinv} is estimated by $u^{\dag}=\Xi(v^{\dag})$.

The complete algorithm of $l_p$EKI is given below.

\vspace{0.05\textwidth}
\begin{algorithm}\label{algo:lpEKI}
\textbf{$l_p$-regularized EKI with SEC}
\end{algorithm}
Assumption: an initial ensemble of size $K$, $\{v_0^{(k)}\}_{k=1}^K$, is given.\\
For $n=1,2,...,$
\begin{enumerate}
	\item Prediction step using the forward model:
	\begin{enumerate}
		\item Apply the augmented forward model $F$ to each ensemble member
\begin{equation}
f_n^{(k)}:=F(v_n^{(k)})=(\tilde{G}(v_n^{(k)}), v_n^{(k)})
\end{equation}
		\item From the set of the predictions $\{f_n^{(k)}\}_{k=1}^K$, calculate the mean and covariances
\begin{equation}\label{eq:rEKI:samplemean}
\overline{f}_n=\frac{1}{K}\sum_{k=1}^Kf_n^{(k)},
\end{equation}
\begin{equation}\label{eq:rEKI:samplecovariance}
\begin{split}
C^{vf}_n&=\frac{1}{K}\sum_{k=1}^K(v_n^{(k)}-\overline{v}_n)\otimes(f_n^{(k)}-\overline{f}_n),\\
C^{ff}_n&=\frac{1}{K}\sum_{k=1}^K(f_n^{(k)}-\overline{f}_n)\otimes(f_n^{(k)}-\overline{f}_n).
\end{split}
\end{equation}
Here $\overline{v}_n$ is the ensemble mean of $\{v_n^{(k)}\}$, i.e., 
\begin{equation}
\overline{v}_n=\displaystyle\frac{1}{K}\sum_{k=1}^Kv_n^{(k)}.
\end{equation}

\item Calculate the sampling error-corrected covariance matrices
	\begin{equation}
	\begin{split}
	C^{vf}_{n,sec}&=v^uR^{vf}_{sec}v^g,\\
C^{ff}_{n,sec}&=v^gR^{ff}_{sec}v^g.
	\end{split}
	\end{equation}
	where $R^{vf}_{sec}$ and $R^{ff}_{sec}$ are the matrices obtained by increasing the
	power of each element of the correlation matrices of $C^{vf}_{n}$ and $C^{ff}_{n}$ to $a+1$.
	\end{enumerate}

\item Analysis step:
	\begin{enumerate}
		\item Update each ensemble member $v_n^{(k)}$ using perturbed measurements $z_{n+1}^{(k)}=z+\zeta_{n+1}^{(k)}$ where $\zeta_{n+1}^{(k)}$ is Gaussian with mean zero and covariance $\Sigma$
\begin{equation}\label{eq:rEKI:ensembleupdate}
v_{n+1}^{(k)}=v_{n}^{(k)}+C^{vf}_{n,sec}(C^{ff}_{n,sec}+\Sigma)^{-1}(z_{n+1}^{(k)}-f_n^{(k)}).
\end{equation}

		\item The $n$-th step $l_p$EKI estimate of $u$, $u_n$, is given by
	\begin{equation}\label{eq:rEKI:finalestimate}
	u_n = \Xi(\overline{v}_n).
	\end{equation}
	\end{enumerate}
	\item Repeat steps 1 and 2 until $u_n$ converges.
\end{enumerate}

The additional computational cost of $l_p$EKI in comparison with the standard EKI without using the augmented system is the inversion of the $(N+M)\times(N+M)$ matrix $C_n^{ff}+\Gamma$ (the standard EKI without the augmented system uses a $m\times m$ covariance matrix). As the matrix is symmetric positive definite, the inversion can be achieved efficiently.

\bibliographystyle{plain}
\bibliography{secEKI}

\end{document}